\theoremstyle{plain}
\newtheorem{theorem}{Theorem}[section]
\newtheorem{proposition}[theorem]{Proposition}
\newtheorem{corollary}[theorem]{Corollary}
\theoremstyle{definition}
\numberwithin{equation}{section}
\newcommand{\R}{\mathbb{R}}
\newcommand{\abs}[2][]{#1\lvert #2 #1\rvert}
\DeclareMathOperator{\sign}{\mathrm sign}
\newcommand{\flow}{\Phi} 
\newcommand{\FF}{{\mathcal S}}
\title{Nonexistence of subcritical solitary waves}
\author{Vladimir Kozlov}
\address{Department of Mathematics, Linköping University, SE-581 83 Linköping, Sweden}
\author{Evgeniy Lokharu}
\address{Department of Mathematics, Linköping University, SE-581 83 Linköping, Sweden}
\author{Miles H.~Wheeler}
\address{Department of Mathematical Sciences, University of Bath, Bath, BA2 7AY, UK}
\date{\today}
\begin{document}
\begin{abstract}
  We prove the nonexistence of two-dimensional solitary gravity water waves with subcritical wave speeds and an arbitrary distribution of vorticity. This is a longstanding open problem, and even in the irrotational case there are only partial results relying on sign conditions or smallness assumptions. As a corollary, we obtain a relatively complete classification of solitary waves: they must be supercritical, symmetric, and monotonically decreasing on either side of a central crest. 
  The proof introduces a new function which is related to the so-called flow force and has several surprising properties. 
  In addition to solitary waves, our nonexistence result applies to ``half-solitary'' waves (e.g.~bores) which decay in only one direction.
\end{abstract}
\maketitle

\section{Introduction} \label{s:introduction}

Solitary water waves are localized disturbances of a fluid surface which travel at constant speed without change of form. They were first discovered by John Scott Russell in 1834 \cite{russell}, who in subsequent experiments observed the empirical relationship
\begin{equation}
  \label{eqn:empirical}
  F^2 = \frac{c^2}{gd} \approx 1 + \frac ad
\end{equation}
between the speed $c$ of the wave and its amplitude $a > 0$. Here $g$ is the acceleration due to gravity, $d$ is depth of the fluid far away from the wave, and $F$ is a dimensionless wave speed called the \emph{Froude number}. While the approximation \eqref{eqn:empirical} is only valid for waves with small amplitude, all of the solitary waves which have so far been rigorously constructed~\cite{Lavrentiev43,FriedrichsHyers54,Beale77,Mielke88,AmickToland81b} are nevertheless \emph{supercritical} in that they satisfy the strict inequality
\begin{equation}
  \label{eqn:supercritical}
  F > 1.
\end{equation}
Using the method of moving planes, Craig and Sternberg~\cite{CraigSternberg88} showed in 1988 that the free surface of a supercritical solitary wave is necessarily symmetric and strictly decreasing on either side of a central crest. They further conjectured that all solitary waves are supercritical, and hence subject to their result. In the present paper we positively resolve this conjecture. Previous work was restricted to \emph{waves of elevation} whose free surface lies everywhere above its asymptotic level~\cite{starr,AmickToland81b,mcleod}, to symmetric and monotone \emph{waves of depression} where the reverse inequality holds~\cite{kp}, or to nearly critical waves with $F \approx 1$~\cite{kk:nearcrit}.

\subsection{Historical discussion}

Russell's observations of solitary waves, including the formula \eqref{eqn:empirical}, are famously inconsistent with Airy's linear shallow water theory. This discrepancy was finally explained using weakly nonlinear models by Boussinesq in 1871 and Rayleigh in 1876. Perhaps the most famous such model is the so-called KdV equation, first written down by Boussinesq in 1877 but later rediscovered by Korteweg and de~Vries in 1895; see the discussion in \cite{miles:survey}. Traveling wave solutions of the KdV equation satisfy a second order ordinary differential equation. For subcritical Froude numbers $F<1$, the equilibrium at the origin is a center, which immediately rules out the existence of solitary waves (i.e.~homoclinic orbits).

In higher-order models, however, the traveling wave ODE has a saddle-center at the origin when $F<1$, and the existence of solitary wave solutions is quite subtle. Generically, one expects so-called ``generalized solitary waves'' that are homoclinic to a small periodic solution, while true solitary waves, referred to as ``embedded solitary waves'' in this context, are now interpreted as a codimension-one phenomenon where the amplitude of this periodic solution vanishes~\cite{Champneys01}. For a general introduction to this topic we refer the reader to the monographs by Boyd~\cite{Boyd98} and Lombardi~\cite{Lombardi00} and the references therein. 

\begin{figure}
  \centering
  \includegraphics[scale=1]{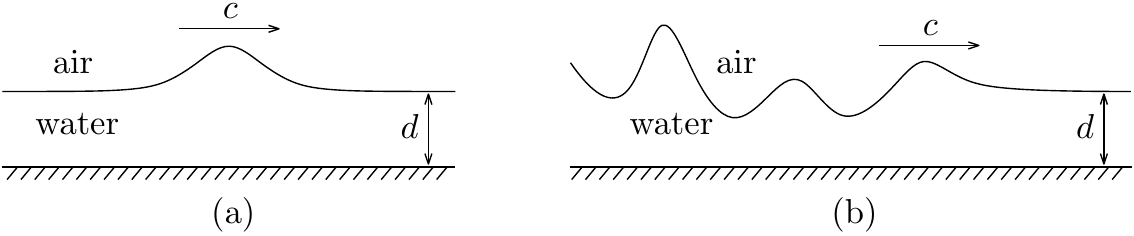}
  \caption{(a) A solitary wave which is symmetric and monotone about a central crest. (b) A ``half-solitary wave'' which converges to some limit as $x\to\infty$ but not necessarily as $x \to -\infty$.}
  \label{fig:halfsolitary}
\end{figure}

The rigorous existence of small-amplitude solitary wave solutions to the full water wave problem goes back to Lavrentiev~\cite{Lavrentiev43} in 1943 and Friedrichs and Hyers~\cite{FriedrichsHyers54} in 1954. Subsequent proofs used more modern methods; Beale~\cite{Beale77} in 1977 used the Nash--Moser implicit function theorem while Mielke~\cite{Mielke88} in 1988 used center-manifold techniques for infinite-dimensional dynamical systems. Note that, for instance by \eqref{eqn:empirical}, the inequality $F > 1$ defining supercriticality is sharp in the small-amplitude limit. Waves with large or ``finite'' amplitude were first constructed by Amick and Toland~\cite{AmickToland81b} in 1981 using global bifurcation techniques, leading to the existence of a limiting extreme wave with an angled crest~\cite{AmickFraenkelToland82}; also see~\cite{AmickToland81a,BenjaminBonaBose90}.
All of these waves are supercritical waves of elevation which are symmetric and \emph{monotone} in that their free surfaces are strictly decreasing on either side of a central crest as in Figure~\ref{fig:halfsolitary}(a). Indeed, especially for large amplitudes, establishing these properties is a key step in the construction.

In 1947, Starr~\cite{starr} discovered a surprisingly simple identity relating the wave speed to the integral of the free surface elevation (the ``excess mass'') and the integral of its square (related to the potential energy). For waves of elevation, this identity immediately implies supercriticality. Amick and Toland provided a rigorous proof of Starr's formula~\cite{AmickToland81b}, and a much simpler argument was given by McLeod~\cite{mcleod}. Keady and Pritchard~\cite{kp} showed that there are no monotone and symmetric solitary waves of depression, while Kozlov and Kuznetsov~\cite{kk:nearcrit} ruled out the existence of mildly subcritical solitary waves with $F < 1$ and $F \approx 1$.

In the infinite-depth limit $d=\infty$, \eqref{eqn:supercritical} can never hold so that in some sense all waves are subcritical and one expects nonexistence. Indeed, S.~M.~Sun~\cite{sun:nonexistence} proved integral identities which rule out the existence of solitary waves of elevation or depression under an assumption on the decay rate of the free surface. Hur~\cite{hur:nosolitary} removed the sign condition by proving a different identity in conformal variables, and recently Ifrim and Tataru~\cite{it:nosolitary} substantially weakened the decay assumption. For waves of elevation or depression, Craig~\cite{craig:nonexistence} gave an elegant nonexistence proof using the maximum principle. This argument has no assumptions whatsoever on the decay rate, and can be generalized to finite-depth waves in three dimensions.

The above results consider the classical water wave problem where the fluid velocity field is assumed to be irrotational. Recently, however, there has been considerable mathematical interest in steady water waves with vorticity; see for instance the surveys~\cite{groves:survey, strauss:survey}. Here we will confine our discussion to the \emph{unidirectional} case where the horizontal fluid velocity in the moving frame has a definite sign. As for irrotational waves, one can define a dimensionless wave speed $F$ so that the critical threshold is $F = 1$ (see \eqref{eqn:F} below), and we will continue to call this quantity the Froude number.
As in the irrotational case, all of the small-amplitude~\cite{ter:rot,hur:exact,GrovesWahlen08} and large-amplitude~\cite{Wheeler13} solitary waves with vorticity that have been constructed are supercritical and monotone waves of elevation. Moreover, supercritical waves are automatically waves of elevation~\cite{Wheeler13,Kozlov2015}, and hence monotone and symmetric~\cite{Hur2008}, and conversely all waves of elevation are supercritical~\cite{Wheeler15b}. Once again there are no mildly subcritical solitary waves~\cite{Kozlov2017a}.

While outside the scope of the present paper, there is also a large literature concerning solitary water waves with the additional effects of surface tension or density stratification. We refer the interested reader to the surveys~\cite{groves:survey,Helfrich2006} and in particular highlight the nonexistence result \cite{sun:nonexistence} and symmetry results in \cite{CraigSternberg92}.

\subsection{Statement of the main result}
Consider a two-dimensional fluid region 
\begin{align*}
  D = \{(x,y) \in \R^2 : 0 < y < d+\eta(x,t)\}
\end{align*}
bounded below by a flat bed $y=0$ and above by a \emph{free surface} $y=d+\eta(x,t)$. Here we think of $d$ as some average or reference depth for the fluid, and $\eta$ as the deviation of the free surface from this level. Writing $(u,v)$ for the components of the fluid velocity and $P$ for the ratio of the pressure and the constant density $\rho$, the incompressible Euler equations in $D$ are
\begin{subequations}\label{eqn:euler}
  \begin{align}
    \label{eqn:euler:u}
    u_t + uu_x + vu_y &= -P_x, \\
    \label{eqn:euler:v}
    v_t + uv_x + vv_y &= -P_y-g, 
  \end{align}
  together with the incompressibility constraint
  \begin{equation}
    \label{eqn:euler:incomp}
    u_x + v_y = 0,
  \end{equation}
  where $g > 0$ is the constant acceleration due to gravity. At the bed $y=0$ we impose the kinematic condition
  \begin{equation}\label{eqn:euler:kinbot}
    v = 0,
  \end{equation}
  while on the free surface $y=d+\eta(x,t)$ we impose both a kinematic
  condition
  \begin{equation}\label{eqn:euler:kintop}
    v = \eta_t + u\eta_x 
  \end{equation}
  and the dynamic boundary condition that
  \begin{equation}\label{eqn:euler:dynamic}
    P = P_{\mathrm{atm}} = \text{constant}.
  \end{equation}
  The kinematic boundary conditions assert that fluid particles on the boundary of the fluid region must remain there for all time, while the dynamic boundary expresses the balance of forces on the free surface.
\end{subequations}

By a \emph{traveling wave} we mean a solution of \eqref{eqn:euler} where $u,v,P,\eta$ depend on $x$ and $t$ only through the combination $x-ct$ for some constant wave speed $c$. In this case \eqref{eqn:euler:u}--\eqref{eqn:euler:incomp} are equivalent to the time-independent equations
\begin{subequations}\label{eqn:trav}
  \begin{align}
    \label{eqn:u}
    (u-c)u_x + vu_y & = -P_x,   \\
    \label{eqn:v}
    (u-c)v_x + vv_y & = -P_y-g, \\
    \label{eqn:incomp}
    u_x + v_y &= 0 
  \end{align}
  in the time-independent fluid domain $0 < y < d+\eta(x)$, while the boundary conditions \eqref{eqn:euler:kinbot}--\eqref{eqn:euler:dynamic} become
  \begin{alignat}{2}
    \label{eqn:kinbot}
    v &= 0&\qquad& \text{on } y=0,\\
    \label{eqn:kintop}
    v &= (u-c)\eta_x && \text{on } y=d+\eta,\\
    \label{eqn:dyn}
    P &= P_{\mathrm{atm}} && \text{on } y=d+\eta.
  \end{alignat}
\end{subequations}
In this paper we are interested in traveling waves which satisfy the
asymptotics
\begin{equation}
  \label{eqn:asym}
  \eta(x) \to 0,
  \quad
  u(x,y) \to U(y),
  \quad 
  v(x,y) \to 0
  \qquad 
  \text{as } x \to +\infty
\end{equation}
for some function $U(y)$. This uniquely defines $d$ as the asymptotic depth of the fluid, and also determines the asymptotic distribution of the vorticity 
\begin{align*}
  \omega=v_x-u_y.
\end{align*}
Indeed, if $\omega \equiv 0$ so that the flow is irrotational, $U(y)$ must be constant. When the limits in \eqref{eqn:asym} also hold as $x \to -\infty$, the wave is called \emph{solitary}. For want of a better term we therefore call solutions of \eqref{eqn:trav}--\eqref{eqn:asym} ``half-solitary waves''; see Figure~\ref{fig:halfsolitary}(b).

One of our central assumptions is that the wave is \emph{unidirectional} in the sense that the relative fluid velocity $u-c$ in the moving frame never vanishes. Without loss of generality we assume $u-c > 0$, and more precisely require 
\begin{equation}
  \inf (u-c) > 0  \label{a:unidirectional}.
\end{equation} 
In the irrotational case where $\omega\equiv 0$, a maximum principle argument implies that \eqref{a:unidirectional} is always satisfied~\cite{Toland96}.

In particular, \eqref{a:unidirectional} means that $U-c>0$, and so we can consider the dimensionless quantity $F > 0$ defined by
\begin{equation}
  \label{eqn:F}
  \frac 1{F^2} = 
  g\int_0^d \frac{dy}{(U(y)-c)^2},
\end{equation}
which we will call the \emph{Froude number}; see~\cite{Wheeler15b} for a detailed discussion of this terminology. In the irrotational case one traditionally works a reference frame where $U \equiv 0$, in which case \eqref{eqn:F} recovers the classical definition $F = c/\sqrt{gd}$. We call a solution to \eqref{eqn:trav} \emph{subcritical} if $F < 1$, \emph{critical} if $F=1$, and \emph{supercritical} if $F > 1$. The value $F = 1$ is critical in the sense that the linearized equations about the ``trivial'' solution
\begin{align}
  \label{eqn:triv}
  u(x,y)=U(y),
  \quad 
  v \equiv 0,
  \quad 
  \eta \equiv 0,
  \quad 
  P(x,y)=P_{\mathrm{atm}}+g(y-d),
\end{align}
have nontrivial bounded solutions if and only if $F \le 1$; if $F = 1$, the solution depends only on the vertical variable $y$.

Somewhat informally stated, our main result is the following. 

\begin{theorem}\label{t:informal}
  The only bounded classical solutions of \eqref{eqn:trav}--\eqref{a:unidirectional} which are subcritical are the trivial solutions \eqref{eqn:triv}.
\end{theorem}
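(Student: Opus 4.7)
The plan is to exploit the unidirectionality \eqref{a:unidirectional} to recast the problem in stream-function variables, use the conserved flow force of Benjamin and Lighthill, and design a modification of it---the new function $\flow(x)$ hinted at in the abstract---whose variation in $x$ has a definite sign when $F<1$. This function is to vanish on the trivial flow \eqref{eqn:triv}, and together with the one-sided asymptotic condition \eqref{eqn:asym} should force the solution to be trivial.

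The first step is to introduce the stream function $\psi$ with $\psi_y = u-c$ and $\psi_x = -v$, normalized so that $\psi = 0$ on the free surface and $\psi = -m$ on the bed $y = 0$, where $m$ is the mass flux. By \eqref{a:unidirectional}, each streamline $\{\psi = p\}$ with $p \in [-m,0]$ is the graph of a smooth function $y = h(x,p)$ with $h_p > 0$; the vorticity satisfies $\omega = \gamma(\psi)$ for a function $\gamma$ determined by the asymptotic profile $U$; and Bernoulli's law reads
\begin{equation*}
  \tfrac12 |\nabla \psi|^2 + g y + P - P_{\mathrm{atm}} = B(\psi),
\end{equation*}
with $B$ again determined by $U$. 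The dynamic boundary condition \eqref{eqn:dyn} then becomes a quadratic relation between $|\nabla\psi|$ and $\eta$ on $y = d+\eta$, and a short computation shows that the classical flow force $S(x)$ is independent of $x$ and equal to its value at $+\infty$ computed from $U$.

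The heart of the argument is to introduce the function $\flow(x)$. I expect it to be a \emph{partial} flow force, obtained by restricting the $y$-integration in $S$ to a sub-column bounded above by one of the streamlines $\{\psi = p\}$---this, presumably, is the role of the ``half flow force'' $\half$ reserved as a macro by the authors---together with compensating gravity terms chosen so that $\flow$ vanishes identically on the trivial flow. The central algebraic identity should express $\flow'(x)$ as an integral of a quadratic form in the deviations of $\eta$, $v$, and $u-U$ from their trivial values, whose definiteness is controlled by the integral $g\int_0^d dy/(U-c)^2$ appearing in \eqref{eqn:F}: subcriticality $F<1$ should be precisely what makes this quadratic form sign-definite, giving a one-sided bound on $\flow'$.

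By far the hardest part is designing $\flow$ so that its derivative has a sign \emph{without any pointwise assumption on $\eta$}. In all earlier approaches (Starr, McLeod, Keady--Pritchard, Kozlov--Kuznetsov, Wheeler) a sign or smallness hypothesis on $\eta$ intervenes exactly here, so a genuinely new integrand must break this pattern---this is the ``surprising property'' of $\flow$ alluded to in the abstract. Once $\flow$ is in hand, \eqref{eqn:asym} forces $\flow(x) \to 0$ as $x \to +\infty$, and the one-sided sign of $\flow'$ then pins $\flow \equiv 0$ on all of $\R$. The equality case of the quadratic identity yields $\eta \equiv 0$, $v \equiv 0$, and $u \equiv U$, which is precisely the trivial solution \eqref{eqn:triv}. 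Since only decay at one end was used, the argument applies uniformly to the half-solitary waves of Figure~\ref{fig:halfsolitary}(b).
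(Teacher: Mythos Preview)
Your proposal has a genuine gap, and in fact the overall strategy differs from the paper's in essential ways.

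First, the function $\flow$ in the paper is a function of \emph{two} variables $(q,p)$, not of $x$ alone. It is defined by replacing the upper limit in the flow-force integral by the streamline coordinate $p$ and subtracting the asymptotic contribution; see \eqref{def:ffff}. The paper never argues that $\flow$ is monotone in $q$. Instead, Proposition~\ref{prop:ffff} shows that $\flow$ satisfies a homogeneous linear elliptic equation in the strip, vanishes on the bottom $p=0$, and equals $w^2\ge 0$ on the top $p=1$ (since the asymptotic condition forces $\FF=\FF_+$). The maximum principle then gives $\flow>0$ throughout $S$. Your expectation that ``$\flow'(x)$ has a definite sign controlled by the integral in \eqref{eqn:F}'' is not the mechanism.

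Second, subcriticality $F<1$ enters not as a sign condition on a quadratic form, but through the spectrum of the Sturm--Liouville problem \eqref{eqn:SL}. When $F<1$, the lowest eigenvalue is negative and all higher ones are positive. Proposition~\ref{p:exact} establishes, via Mielke's attraction result and the Groves--Wahl\'en center manifold, that $w$ decays exactly like $a\varphi(p)e^{-\tau q}$ with $\tau^2>0$ a \emph{positive} eigenvalue; the Harnack inequality applied to $\flow>0$ rules out super-exponential decay so that $a\ne 0$. Because $\tau^2$ is not the bottom eigenvalue, oscillation theory forces $\varphi$ to vanish in $(0,1)$. Plugging the asymptotics into \eqref{def:ffff} gives \eqref{eqn:contradiction}, whose leading coefficient $\varphi\varphi_p$ must change sign on $(0,1)$, contradicting $\flow>0$.

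Finally, even on its own terms your sketch is incomplete at the decisive point: you write that ``by far the hardest part is designing $\flow$ so that its derivative has a sign without any pointwise assumption on $\eta$'', and then do not carry this out. That is precisely the step where all previous approaches (Starr, McLeod, Keady--Pritchard, etc.) fail, so leaving it as a hope is not a proof. The paper avoids this obstruction entirely by trading the one-dimensional monotonicity idea for a two-dimensional maximum-principle argument combined with exact spectral asymptotics.
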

See Theorem~\ref{t:main} for a precise version. Combining Theorem~\ref{t:informal} with the elevation and symmetry results in \cite{CraigSternberg88,Hur2008,Wheeler13,Kozlov2015,ChenWalshWheeler18}, we immediately obtain the following corollary, which states that all half-solitary waves are in fact supercritical solitary waves which therefore enjoy the symmetry and monotonicity properties established in~\cite{CraigSternberg88,Hur2008}. For a precise version see Corollary~\ref{c:main} below.
\begin{corollary}\label{c:informal}
  Any nontrivial bounded classical solution of \eqref{eqn:trav}--\eqref{a:unidirectional} is a supercritical and monotone solitary wave of elevation. Here symmetry and monotonicity mean that, after a translation in $x$, the free surface profile $\eta(x)$ is even, strictly positive, and satisfies $\eta'(x) < 0$ for $x > 0$.
\end{corollary}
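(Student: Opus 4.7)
The plan is to derive the corollary by chaining the paper's new nonexistence theorem with three strands of previously developed theory (elevation, symmetry, solitariness). Throughout I fix a nontrivial bounded classical solution $(u,v,P,\eta)$ of \eqref{eqn:trav}--\eqref{a:unidirectional} satisfying the one-sided asymptotics \eqref{eqn:asym}, and aim to show it is a supercritical, monotone, solitary wave of elevation.

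First I would eliminate all non-supercritical Froude numbers. The range $F<1$ is excluded immediately by Theorem~\ref{t:informal} (in its precise form Theorem~\ref{t:main}). For the boundary case $F=1$, I would either read the conclusion off from the precise statement or handle it separately: the remark just before Theorem~\ref{t:informal} shows that bounded solutions of the linearization about the trivial flow at $F=1$ depend only on $y$, and a perturbative argument exploiting the decay in \eqref{eqn:asym} rules out nontrivial solutions of the fully nonlinear problem at $F=1$. Hence $F>1$.

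With supercriticality in hand, I would invoke the results of Wheeler~\cite{Wheeler13} and Kozlov--Kuznetsov~\cite{Kozlov2015} (see also~\cite{Wheeler15b}) to conclude that the wave is a \emph{wave of elevation}, i.e.\ $\eta(x)>0$ wherever the wave is not at its asymptotic state. These proofs rest on a maximum-principle comparison involving the stream function, for which the unidirectionality hypothesis \eqref{a:unidirectional} provides the necessary ellipticity. Once $\eta>0$ is established, the third step is an Alexandrov moving-planes argument; in the irrotational case this is the procedure of Craig--Sternberg~\cite{CraigSternberg88}, while under vorticity it is carried out by Hur~\cite{Hur2008} and refined in~\cite{ChenWalshWheeler18}. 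One initiates reflection planes at $x=+\infty$, where the reflected profile lies strictly below the original by \eqref{eqn:asym}, and slides them leftward until strict comparison first fails at some $x_0$; the strong maximum principle then forces exact reflection symmetry about $x=x_0$ together with strict monotonicity $\eta'(x)<0$ for $x>x_0$. Translating so that $x_0=0$ yields the claimed evenness, positivity, and monotonicity of $\eta$, and evenness combined with the decay at $+\infty$ automatically gives decay at $-\infty$, upgrading the half-solitary wave to a genuine solitary wave.

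The main obstacle is of course the first step, which is exactly the main result of the paper and carries all of the substance. Downstream, everything is an invocation of well-developed machinery; the only real technical care needed is to verify that the cited elevation and symmetry results apply under the weaker one-sided decay \eqref{eqn:asym} rather than under the usual two-sided solitary-wave asymptotics. The moving-planes step in particular requires decay only at one end to initiate, so no additional input is required there, and the maximum-principle arguments underlying the elevation results are local in nature.
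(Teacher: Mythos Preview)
Your proposal follows essentially the same route as the paper: rule out $F<1$ by Theorem~\ref{t:main}, rule out $F=1$ separately, then apply the elevation results of \cite{Wheeler13,Kozlov2015} and the moving-planes argument, citing \cite{ChenWalshWheeler18} specifically because only one-sided decay is assumed.

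The one point where you diverge is the critical case $F=1$. Theorem~\ref{t:main} as stated covers only $F<1$, so you cannot ``read the conclusion off from the precise statement.'' Your alternative---that the linearization at $F=1$ has only $y$-dependent bounded solutions, combined with an unspecified perturbative argument---is not a proof: the linear observation does not by itself preclude nontrivial nonlinear solutions, and the perturbative step would need to be made precise. The paper handles this simply by citing the existing literature \cite{Kozlov2017a} (see also \cite{Wheeler15b}), where nonexistence of critical solitary waves is established. You should do the same rather than sketch a new argument.
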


Since it will be important for our proof, we conclude this subsection by introducing an invariant for steady waves called the \emph{flow force}~\cite{Benjamin84}, which is defined as
\begin{align}
  \label{eqn:FF}
  \FF = \int_0^{d+\eta} (P-P_\text{atm}+(u-c)^2)\, dy.
\end{align}
A direct calculation using \eqref{eqn:trav} shows that \eqref{eqn:FF} is a constant independent of $x$. The definition of $\FF$ is motivated by the divergence form of the momentum equations \eqref{eqn:u}--\eqref{eqn:v},
\begin{subequations}\label{eqn:div}
  \begin{align}
    \label{eqn:div1}
    \big(P+(u-c)^2\big)_x + \big((u-c)v\big)_y &= 0, \\
    \label{eqn:div2}
    \big((u-c)v\big)_x + \big(P+v^2+gy\big)_y &= 0.
  \end{align}
\end{subequations}

\subsection{Strategy of the proof}

The proof of Theorem~\ref{t:informal} centers on the flow force flux function $\flow(x,y)$ defined precisely in \eqref{def:ffff}. Roughly speaking, $\flow$ is the right hand side of \eqref{eqn:FF}, but with the upper limit of the integral replaced by $y$ and the limit as $x \to \infty$ subtracted off in an appropriate way. To our knowledge this particular function has not appeared before in the literature, and we believe that it will have further applications. In Proposition~\ref{prop:ffff} below, we prove that $\flow$ satisfies a linear elliptic equation to which the maximum principle applies. Since $\flow$ vanishes on the bed $y=0$ and is equal to $\eta^2 \ge 0$ on the surface, we deduce that it is strictly positive throughout the interior of the fluid. It is worth mentioning that, in the irrotational case, the function $\flow$ appears implicitly in Babenko's equation~\cite{Babenko87b} as the harmonic extension of $\eta^2$. We emphasize, however, that this description alone is not sufficient for our arguments.

Applying the Harnack inequality to $\flow \ge 0$, we see that a hypothetical subcritical half-solitary wave cannot decay at a super-exponential rate. On the other hand, it must have at least exponential decay thanks to the ``normal hyperbolicity''~\cite{Mielke1990} of the center manifold of small uniformly bounded solutions~\cite{Mielke88,GrovesWahlen08}, all of which are periodic. Arguing as in the supercritical case~\cite{Hur2008} then yields precise exponential asymptotics for the wave and hence for $\flow$ via its explicit definition. These asymptotics are not at all obvious from the characterization of $\flow$ as the solution to an elliptic boundary value problem, and in fact imply that $\flow$ changes sign near infinity, violating the maximum principle and leading to a contradiction.

The outline of the paper is as follows. In Section~\ref{s:prelim}, we give a precise statement of our main results and perform a standard change of variables which transforms~\eqref{eqn:trav} into an elliptic boundary value problem for a function $w(q,p)$ in an infinite strip. We also recall several well-known facts about the dispersion relation for the linearized problem. Section~\ref{s:proof} then contains the proof of our main result Theorem~\ref{t:informal} as well as Corollary~\ref{c:informal}.

\section{Statement of the problem} \label{s:prelim} \subsection{Notation} \label{s:notation}
For possibly unbounded domains $\Omega \subset \R^n$, we say that $u \in C^{k,\gamma}(\overline\Omega)$ if the usual $C^{k,\gamma}$ H\"older norm of $u$ is finite. 

\subsection{Reformulation} \label{s:formulation}
In this subsection we reformulate \eqref{eqn:trav}, under the important unidirectionality assumption \eqref{a:unidirectional}, as an elliptic equation \eqref{eqn:lin} for a function $w(q,p)$; the asymptotic condition \eqref{eqn:asym} becomes simply $w \to 0$ as $q \to +\infty$. Such transformations for water waves date back to the work of Dubreil--Jacotin~\cite{DubreilJacotin34} and are now completely standard, but we outline the main steps here so that the reader may more easily translate between the various notations.

For convenience, we will adopt the non-dimensional variables proposed by Keady and Norbury \cite{KeadyNorbury78}, which have length scale $(m^2/g)^{1/3}$ and velocity scale $(mg)^{1/3}$, where here the mass flux
\begin{align*}
  m = \int_0^{d+\eta(x)} (u(x,y)-c)\, dy
\end{align*}
is a constant independent of $x$ by incompressibility~\eqref{eqn:incomp} and the kinematic boundary conditions \eqref{eqn:kinbot}--\eqref{eqn:kintop}. In these units we now have $m=1$ and $g=1$.

Again by incompressibility \eqref{eqn:incomp}, there exists a stream function $\psi$ satisfying
\begin{align}
  \label{eqn:psi}
  \psi_x = -v, \qquad \psi_y = u-c. 
\end{align}
Moreover, the kinematic boundary conditions \eqref{eqn:kinbot}--\eqref{eqn:kintop} guarantee that $\psi$ is constant both on the bed $y=0$ and the free surface $y=d+\eta$. We normalize $\psi$ so that it is zero at the bed, in which case its value on the free surface is $m=1$. Taking the curl of \eqref{eqn:u}--\eqref{eqn:v}, we find that the vorticity
\begin{align}
  \label{eqn:omega}
  \omega = v_x - u_y 
\end{align}
and the stream function have parallel gradients, $\psi_x \omega_y - \psi_y \omega_x = 0$. Since our assumption \eqref{a:unidirectional} gives $\inf \psi_y > 0$, this then implies that $\omega$ is globally given as some nonlinear function of $\psi$. By an abuse of notation, we will write this relationship as
\begin{align*}
  \omega = \omega(\psi),
\end{align*}
and call $\omega(\psi)$ the \emph{vorticity function}. Substituting \eqref{eqn:psi} into \eqref{eqn:omega} now yields the semilinear 
elliptic equation 
\begin{equation*}
  \Delta\psi+\omega(\psi)=0. 
\end{equation*}

Next we eliminate the pressure $P$ using Bernoulli's law, which states that 
\begin{align}
  \label{eqn:bernoulli}
  P-P_\mathrm{atm} + \frac 12\abs{\nabla\psi}^2 + y  + \Omega(\psi) - \Omega(1) = R,
\end{align}
where here $R$ is the \emph{Bernoulli constant} and
\begin{align*}
  \Omega(\psi) = \int_0^\psi \omega(p)\,dp
\end{align*}
is a primitive of the vorticity function $\omega(\psi)$. Bernoulli's law can be easily verified by applying a gradient and using \eqref{eqn:u}--\eqref{eqn:v} to eliminate $P_x$ and $P_y$. Rewriting the dynamic boundary condition \eqref{eqn:dyn} using \eqref{eqn:bernoulli}, the traveling wave system \eqref{eqn:trav} becomes 
\begin{equation}
  \label{eqn:stream}
  \begin{alignedat}{2}
    \Delta\psi+\omega(\psi)&=0 &\qquad& \text{for } 0 < y < d + \eta,\\
    \tfrac 12\abs{\nabla\psi}^2 + y  &= R &\quad& \text{on }y=d+\eta,\\
    \psi  &= 1 &\quad& \text{on }y=d+\eta,\\
    \psi  &= 0 &\quad& \text{on }y=0.
  \end{alignedat}
\end{equation}
The asymptotic condition \eqref{eqn:asym} becomes
\begin{equation}
  \label{eqn:stream:asym}
  \eta(x) \to 0,
  \quad
  \psi(x,y) \to \Psi(y)
  \qquad 
  \text{as } x \to +\infty,
\end{equation}
where here
\begin{align*}
  \Psi(y) = \int_0^y (U(y)-c)\, dy
\end{align*}
must be related to $\omega(\psi),R,d$ through
\begin{align*}
  \Psi_{yy} + \omega(\Psi) = 0,
  \quad 
  \Psi(0)=0,
  \quad 
  \Psi(1)=1,
  \quad 
  \tfrac 12 \Psi_y^2(1) + d = R.
\end{align*}

While the reformulation \eqref{eqn:stream} of \eqref{eqn:trav} has many appealing features, it is still a free boundary problem in the sense the upper boundary $y=d+\eta$ of the fluid domain is itself an unknown. The unidirectionality assumption \eqref{a:unidirectional} allows us to flatten this boundary through an elegant change of coordinates~\cite{DubreilJacotin34}, at the cost of making the equations more nonlinear. Indeed, $\inf \psi_y > 0$ and so we can use
\begin{align*}
  q = x,
  \quad 
  p = \psi
\end{align*}
as new independent variables and
\begin{align*}
  y = h(q,p) 
\end{align*}
as the dependent variable. We call $h$ the \emph{height function}. The chain rule yields
\begin{align}
  \label{eqn:chainrule}
  \psi_x = -\frac{h_q}{h_p},
  \qquad 
  \psi_y = \frac 1{h_p},
\end{align}
so that in particular our assumption \eqref{a:unidirectional} implies
\begin{align*}
  \inf h_p > 0.
\end{align*}

Substituting \eqref{eqn:chainrule} into \eqref{eqn:stream}, one obtains the equivalent problem for the height function $h$,
\begin{subequations}\label{eqn:height}
  \begin{alignat}{2}
    \label{eqn:height:bulk}
    \biggl( \frac{1+h_q^2}{2h_p^2} + \Omega(p) \biggr)_p
    - \bigg( \frac{h_q}{h_p} \bigg)_q &= 0 &\qquad& \text{for } 0 < p < 1,\\
    \label{eqn:height:dynamic}
    \frac{1+h_q^2}{2h_p^2} + h  &= R
    && \text{on } p=1,\\
    \label{eqn:height:dirichlet}
    h &= 0 && \text{on } p=0,
  \end{alignat}
\end{subequations}
where the divergence form of \eqref{eqn:height:bulk} comes from \eqref{eqn:div2}; see~\cite{cs:discont}. Using \eqref{eqn:chainrule} and Bernoulli's law \eqref{eqn:bernoulli}, the flow force $\FF$ defined in \eqref{eqn:FF} is seen to be
\begin{align}
  \label{eqn:FFh}
  \FF = \int_0^1 \bigg(
  \frac{1-h_q^2}{2h_p^2} - h - \Omega(p)+\Omega(1) + R
  \bigg)h_p\, dp.
\end{align}
The asymptotic condition \eqref{eqn:stream:asym} is transformed into
\begin{equation}
  \label{eqn:hasym}
  h(q,p) \to H(p) \quad \text{as }q \to +\infty,
\end{equation}
where here $H$ is related to $\Psi$ via $y=H(\Psi(y))$ and must satisfy
\begin{align}
  \label{eqn:Hode}
  \Big( \frac 1{2H_p^2} + \Omega\Big)_p  = 0,
  \quad
  H(0)=0,
  \quad
  H(1)=d,
  \quad
  \frac 1{2H_p^2(1)} + d = R.
\end{align}
In particular, the Froude number $F$ is given by
\begin{equation*}
  \frac 1{F^2} = \int_0^1 H_p^3\, dp.
\end{equation*}

In light of \eqref{eqn:hasym}, it is natural to introduce the difference
\[
  w = h-H.
\]
Note that $w(q,1) = \eta(q)$. Substituting $h=H+w$ into \eqref{eqn:height} and isolating the linear terms, we obtain
\begin{equation}
  \label{eqn:lin}
  \begin{alignedat}{2}
    \biggl(\frac{w_p}{H_p^3}\biggr)_p + \biggl(\frac{w_q}{H_p}\biggr)_q  
    &=  N_1(w) &\qquad& \text{for } 0 < p < 1,\\ 
    -\frac{w_p}{H_p^3} +  w &= N_2(w) && \text{on } p=1, \\
    w &= 0 && \text{on } p=0, 
  \end{alignedat}
\end{equation}
where the asymptotic condition \eqref{eqn:hasym} becomes simply
\begin{equation}
\label{eqn:wasym}
w(q,p) \to 0 \quad \text{as } q\to +\infty.
\end{equation}
The nonlinear terms on the right-hand side of \eqref{eqn:lin} are given by
\begin{align*}
   N_1(w) &=  
  \bigg( \frac{H_p^3 w_q^2 + (2w_p+3H_p)w_p^2}{2H_p^3(w_p+H_p)^2}\bigg)_p
  + \bigg( \frac{w_pw_q}{H_p(w_p+H_p)} \bigg)_q,\\
  N_2(w) &= 
 -\frac{H_p^3 w_q^2 + (2w_p+3H_p)w_p^2}{2H_p^3(w_p+H_p)^2}.
\end{align*}

\subsection{Dispersion relation}\label{s:dispersion}

The following linearized version of \eqref{eqn:lin}, in which $N_1$ and $N_2$ have been set equal to zero, plays a central role in our arguments:
\begin{align}
\label{eqn:linhom}
\begin{alignedat}{2}
\biggl(\frac{v_p}{H_p^3}\biggr)_p + \biggl(\frac{v_q}{H_p}\biggr)_q & = 0 &\qquad& \text{for } 0<p<1, \\
-\frac{v_p}{H_p^3} +  v & = 0 && \text{on } p=1,\\
v& = 0 && \text{on } p=0.
\end{alignedat}
\end{align}
This is a first-order approximation for small-amplitude water waves, and the leading-order approximation for small-amplitude water waves which are periodic in $q$.

Separating variables as usual, we find that bounded solutions of \eqref{eqn:linhom} are linear combinations of the functions
\[
\cos(\abs{-\lambda_j}^{1/2}q) \varphi_j(p), \quad \sin(\abs{-\lambda_j}^{1/2}q) \varphi_j(p)
\]
where $\lambda_j$ and $\varphi_j$ are the non-positive eigenvalues and corresponding eigenfunctions of the Sturm--Liouville problem
\begin{equation}
  \label{eqn:SL}
  \begin{alignedat}{2}
    - \left( \frac{\varphi_p}{H_p^3} \right)_p & = \lambda \frac{\varphi}{H_p} 
    &\qquad& 0 < p < 1,\\ 
    -\frac{\varphi_p}{H_p^3} + \varphi &= 0 && p=1,\\ 
    \varphi & = 0 && p=0. 
  \end{alignedat}
\end{equation}
It is well known that Sturm--Liouville problems such as \eqref{eqn:SL} have a countable set of simple eigenvalues $\lambda_0 < \lambda_1 < \cdots < \lambda_j < \cdots$ accumulating at infinity, and the corresponding eigenfunctions can be rescaled to form an orthonormal basis for $L^2(0,1;H_p^{-1})$, the space of $L^2$ functions on $(0,1)$ with measure $H_p^{-1}\,dp$. Classical oscillation theory asserts that $\varphi_j$ has precisely $j$ zeros on the interval $(0,1)$~\cite[theorem~5.11]{Teschl12}. Moreover, a calculation shows that $\lambda_0 < 0$ if and only if $F<1$,  and in this case $\lambda_1 > 0$; see for instance \cite[theorem~5.17]{Teschl12}. Throughout this paper we are assuming $F < 1$ so that the wave is subcritical, and therefore we write 
\begin{align*}
\lambda_0 = -\tau^2_0 < 0, 
\qquad \lambda_j = \tau_j^2 > 0 \quad \text{for }j=1,2,\ldots.
\end{align*}

\subsection{Precise statement of the main result}\label{s:precise}
A more precise version of our main result Theorem~\ref{t:informal}, as well as Corollary~\ref{c:informal}, are as follows. We assume that the vorticity function $\omega \in C^\gamma$, or equivalently that $H \in C^{2,\gamma}$.
\begin{theorem} \label{t:main} 
  Let $w \in C^{2,\gamma}(\bar S)$ be a solution of \eqref{eqn:lin}--\eqref{eqn:wasym} with subcritical Froude number $F < 1$. Then $w \equiv 0$.
\end{theorem}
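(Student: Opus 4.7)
The plan follows the strategy outlined after Corollary~\ref{c:informal}. I would introduce the flow force flux function $\flow(q,p)$ by integrating the flow-force density of \eqref{eqn:FFh} from the bed up to the level $p$, subtracting the corresponding integral evaluated on the trivial flow $H$. A direct computation, mirroring the divergence form of the momentum equations \eqref{eqn:div1}--\eqref{eqn:div2}, should show that $\flow$ satisfies a linear, homogeneous, divergence-form elliptic equation in the strip $0 < p < 1$, and the normalization can be arranged so that
\begin{align*}
  \flow(q,0) = 0, \qquad \flow(q,1) = w(q,1)^2 = \eta(q)^2 \ge 0, \qquad \flow(q,p) \to 0 \text{ as } q \to +\infty.
\end{align*}
The strong maximum principle then forces $\flow > 0$ throughout the open strip, unless $\eta \equiv 0$, in which case the conclusion $w \equiv 0$ would follow from unique continuation for \eqref{eqn:lin}.

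The contradiction will come from comparing two estimates on the decay rate of $\flow$. For a lower bound, applying the Harnack inequality to the positive solution $\flow$ on translated unit squares yields that $\flow$, and hence $\eta^2$ and $w$, cannot decay faster than exponentially. For an upper bound I would invoke the center manifold reduction of Mielke and Groves--Wahl\'en~\cite{Mielke88,GrovesWahlen08}: small bounded solutions of \eqref{eqn:lin}--\eqref{eqn:wasym} lie on a finite-dimensional center manifold, whose dynamics in the subcritical case is governed by the single negative eigenvalue $\lambda_0 = -\tau_0^2$ of \eqref{eqn:SL} and consists entirely of periodic orbits. Since $w$ actually decays, its center component must vanish, and by normal hyperbolicity~\cite{Mielke1990} the solution then decays exponentially at some rate $\tau_j$ with $j \ge 1$. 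A bootstrap along the lines of the supercritical analysis in~\cite{Hur2008} should promote this to precise asymptotics
\begin{align*}
  w(q,p) = A\, e^{-\tau_j q}\,\varphi_j(p) + o(e^{-\tau_j q}), \qquad q \to +\infty,
\end{align*}
for some $A \neq 0$ and some $j \ge 1$, where $\varphi_j$ is the corresponding eigenfunction from \eqref{eqn:SL}.

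Substituting this expansion into the explicit definition of $\flow$, which is at least quadratic in $(w, \nabla w)$ near zero, produces an asymptotic expansion
\begin{align*}
  \flow(q,p) = A^2\, e^{-2\tau_j q}\, G_j(p) + o(e^{-2\tau_j q}),
\end{align*}
where $G_j(p)$ is a fixed bilinear expression in $\varphi_j(p)$, $\varphi_j'(p)$, and $H_p(p)$. Since $j \ge 1$, Sturm--Liouville theory guarantees that $\varphi_j$ has $j \ge 1$ interior zeros on $(0,1)$, and a short direct calculation should show that $G_j$ is strictly negative on a subinterval of $(0,1)$. For $q$ sufficiently large this contradicts the strict positivity $\flow > 0$ from the first step, forcing $A = 0$. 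Combined with the Harnack lower bound, this gives $w \equiv 0$ for $q$ large, and unique continuation for the elliptic system \eqref{eqn:lin} then propagates the vanishing to the whole strip.

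The principal obstacle I expect is the sign analysis of the profile $G_j$: one must verify that the flow-force-flux profile associated to any subcritical eigenmode $\varphi_j$ with $j \ge 1$ cannot be pointwise nonnegative on $[0,1]$. This is the step in which the subcriticality hypothesis $F < 1$ plays its decisive role, since it is precisely for $F < 1$ that the relevant $\varphi_j$ are forced to oscillate ($j \ge 1$ rather than $j = 0$) while $\flow$ retains its one-signed boundary data through the Robin condition at $p = 1$.
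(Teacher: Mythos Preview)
Your proposal is correct and follows essentially the same route as the paper: introduce the flow force flux function $\flow$, show it satisfies a homogeneous elliptic equation with nonnegative boundary data so that $\flow>0$ by the maximum principle, use Harnack together with the center-manifold/normal-hyperbolicity argument to obtain exact asymptotics $w \sim a e^{-\tau_j q}\varphi_j$ with $j\ge 1$, and then derive a sign contradiction for $\flow$. The ``principal obstacle'' you flag is resolved in the paper by integrating by parts against the Sturm--Liouville equation, which collapses the quadratic profile to $G_j(p)=\varphi_j(p)\varphi_j'(p)/\big(2H_p^3(p)\big)=(\varphi_j^2)'/(4H_p^3)$; since $\varphi_j^2$ vanishes at $p=0$ and at an interior point, its derivative must change sign on $(0,1)$.
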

Using Theorem~\ref{t:main}, we can now prove a precise version of Corollary~\ref{c:informal}.
\begin{corollary}\label{c:main}
  Let $w \in C^{2,\gamma}(\bar S)$ solve \eqref{eqn:lin}--\eqref{eqn:wasym}, and assume that $w \not \equiv 0$. Then $F > 1$, and $w$ is a monotone solitary wave of elevation in that, after a translation, $w$ is even in $q$ and satisfies $w_q < 0$ for $q > 0$ and $0 < p \le 1$.
  \begin{proof}
    Let $w$ be as in the statement of the corollary. By Theorem~\ref{t:main}, $w$ cannot be subcritical, and from~\cite{Kozlov2017a} (also see~\cite{Wheeler15b}) we know that $w$ cannot be critical either. Thus $w$ is supercritical, and hence $w > 0$ for $0 < p \le 1$ by the maximum principle argument in~\cite{Wheeler13,Kozlov2015}. The hypotheses of the moving planes argument in~\cite{ChenWalshWheeler18} are therefore satisfied, which gives the result. Here we cite~\cite{ChenWalshWheeler18} because we assume decay only as $q \to +\infty$; the corresponding result with decay in both directions is originally due to Hur~\cite{Hur2008} with vorticity and \cite{CraigSternberg88} in the irrotational case.
  \end{proof}
\end{corollary}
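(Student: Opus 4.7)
The plan is to establish the trichotomy $F<1$, $F=1$, $F>1$ and rule out the first two cases via cited nonexistence results, then upgrade a supercritical nontrivial solution first to a wave of elevation and finally to the symmetric, monotone solitary waveform.

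First, assuming $w \not\equiv 0$, I would exclude $F < 1$ by direct application of Theorem~\ref{t:main}, which says every subcritical solution of \eqref{eqn:lin}--\eqref{eqn:wasym} vanishes identically. The critical case $F=1$ cannot be reached by that theorem and requires a separate input: the half-solitary nonexistence result of Kozlov--Kuznetsov~\cite{Kozlov2017a} (see also~\cite{Wheeler15b}), which precisely asserts that the only $w \in C^{2,\gamma}(\bar S)$ solving \eqref{eqn:lin}--\eqref{eqn:wasym} with $F=1$ is $w \equiv 0$. Combining these two inputs forces $F > 1$, giving the first conclusion of the corollary.

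Second, from supercriticality I would deduce that $w$ is a strict wave of elevation, i.e.\ $w > 0$ on $\bar S \cap \{0 < p \le 1\}$. Here I would invoke the maximum principle argument of~\cite{Wheeler13,Kozlov2015}: since $w$ solves the divergence-form elliptic equation in \eqref{eqn:lin} with Dirichlet data on $p=0$ and the oblique derivative condition on $p=1$, one exhibits a suitable positive supersolution or test function built from the first eigenfunction $\varphi_0$ of \eqref{eqn:SL} (which now has $\lambda_0 > 0$ since $F > 1$) to preclude $w$ from touching zero from above anywhere with $p > 0$. The strong maximum principle in the interior plus the Hopf lemma at $p=1$ then promotes nonnegativity to strict positivity. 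The point is that supercriticality makes the linearization about $w=0$ coercive, so a nontrivial $w$ cannot oscillate about the trivial state.

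Third, with the elevation property in hand, I would invoke the moving planes argument of~\cite{ChenWalshWheeler18} to conclude symmetry and strict monotonicity. That reference is designed precisely for solutions satisfying the one-sided decay \eqref{eqn:wasym}, which is why we cannot simply quote the earlier two-sided results of Hur~\cite{Hur2008} and Craig--Sternberg~\cite{CraigSternberg88}. The method produces a line of symmetry $q = q_0$; after translating in $q$ so that $q_0 = 0$, the function $w$ becomes even in $q$, and the strict sliding inequalities delivered by the procedure give $w_q < 0$ for $q > 0$ and $0 < p \le 1$. As a byproduct, the decay as $q \to +\infty$ propagates by symmetry to decay as $q \to -\infty$, upgrading the half-solitary wave to a genuine solitary one.

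The corollary is in essence a synthesis of the main Theorem~\ref{t:main} with four pieces of the existing literature, so the main difficulty is not computational but bookkeeping: one must verify that every ingredient (critical-case nonexistence, elevation, symmetry) admits the weak one-sided decay hypothesis \eqref{eqn:wasym}. This is precisely why \cite{Kozlov2017a} is cited for the critical step and \cite{ChenWalshWheeler18} for the moving planes step rather than their stronger two-sided predecessors; once this compatibility is confirmed the conclusions assemble immediately in the order above.
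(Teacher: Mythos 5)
Your proposal is correct and follows essentially the same route as the paper: Theorem~\ref{t:main} excludes $F<1$, \cite{Kozlov2017a,Wheeler15b} exclude $F=1$, the maximum principle argument of \cite{Wheeler13,Kozlov2015} upgrades supercriticality to strict elevation, and the one-sided moving planes result of \cite{ChenWalshWheeler18} (rather than the two-sided \cite{Hur2008,CraigSternberg88}) gives symmetry and monotonicity. The only difference is that you sketch the internal mechanism of the cited elevation argument, which the paper simply invokes by reference; the logical structure and the bookkeeping of which citation handles which decay hypothesis match the paper exactly.
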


\section{Proof of the main result}\label{s:proof}

The proof of Theorem~\ref{t:main} centers on the properties of the (relative) flow force flux function $\flow$, which we believe is introduced in the present paper for the first time. Its definition in \eqref{def:ffff} below is motivated by the following calculation.
Suppose $h \in C^{2,\gamma}(\bar S)$ is a solution of \eqref{eqn:height} but not necessarily \eqref{eqn:hasym}, and let $H(p)$ be a solution of \eqref{eqn:height} with the same Bernoulli constant $R$, i.e.~a solution of \eqref{eqn:Hode}. We seek a simple formula for the difference $\FF-\FF_+$, where $\FF$ is the flow force constant \eqref{eqn:FFh} for $h$ and $\FF_+$ is the constant for $H$. Replacing $h$ with $H$ in \eqref{eqn:FFh} we see that 
\begin{align}
  \label{eqn:FFplus}
  \FF_+ = \int_0^1 \bigg( \frac 1{2H_p^2(p)} - H - \Omega(p) + \Omega(1) + R \bigg)H_p(p)\, dp,
\end{align}
and so setting $w=h-H$ yields
\begin{align*}
  \FF - \FF_+ &= \int_0^1 I(q,p)\, dp,
\end{align*}
where the integrand is given by
\begin{align*}
  I &= \bigg(
\frac{1-w_q^2}{2h_p^2} - h - \Omega+\Omega(1) + R
\bigg)h_p
 -  \bigg( \frac 1{2H_p^2} - H - \Omega + \Omega(1) + R \bigg)H_p \\
&= \bigg( \frac{1-w_q^2}{2h_p^2}-\frac 1{2H_p^2} - w \bigg)H_p
+  \bigg(
\frac{1-w_q^2}{2h_p^2} - h - \Omega+\Omega(1) + R
\bigg)w_p \\
  & = \frac 1{2h_p} - \frac{w_q^2}{2h_p} - \frac 1{2H_p} - hh_p + Hh_p - Hw_p
  + (-\Omega + \Omega(1) + R)w_p.
\end{align*}
Substituting the identity
\[
  -\Omega(p) + \Omega(1)+R = \frac{1}{2H_p^2(p)} + H(1),
\]
which follows directly from \eqref{eqn:Hode}, we split $I$ into three terms,
\begin{equation}
  \label{eqn:I123}
  \begin{aligned}
    I &=  -\frac{w_q^2}{2h_p} + \frac 12 \Big( \frac 1{h_p} - \frac 1{H_p} + \frac{w_p}{H_p^2} \Big)
    + (-hh_p + Hh_p - Hw_p + H(1)w_p)\\
    &=: I_1 + I_2 + I_3.
  \end{aligned}
\end{equation}
Putting $I_2$ over a common denomiator yields
\begin{align*}
  I_2 = \frac{H_p^2 - H_p h_p + h_p w_p}{h_p H_p^2} = \frac{w_p^2}{h_pH_p^2},
\end{align*}
while $I_3$ simplifies to 
\begin{align*}
  I_3 = -Hw_p - wH_p -ww_p + H(1)w_p
  = \Big( {-Hw} + H(1)w - \tfrac 12 w^2 \Big)_p.
\end{align*}
Substituting back into \eqref{eqn:I123} and integrating the total derivative, we find
\begin{equation}\label{eqn:rhsme}
  2(\FF-\FF_+)= -w^2(q,1) + \int_0^1 \bigg( \frac{w_p^2}{h_p H_p^2} - \frac{w_q^2}{h_p} \bigg) dp.
\end{equation}
We define the (relative) flow force flux function $\flow(q,p)$ to be the integral in \eqref{eqn:rhsme} but with the upper limit replaced by $p$,
\begin{equation} \label{def:ffff}
  \flow(q,p) = \int_0^p \bigg( \frac{w_p^2(q,p')}{h_p(q,p') H_p^2(p')} - \frac{w_q^2(q,p')}{h_p(q,p')} \bigg) dp'.
\end{equation}

A surprising new fact about $\flow$ is that it solves a homogeneous elliptic equation as stated in the following
\begin{proposition}\label{prop:ffff} 
  Let $h, H \in C^{2,\gamma}(\bar S)$ be as above and define the flow force flux function $\flow$ by \eqref{def:ffff}. Then there exist coefficients $b_1,b_2 \in L^{\infty}(S)$ such that $\flow \in C^{2,\gamma}(\bar S)$ satisfies the linear equation
  \begin{align}
    \label{eqn:elliptic}
    \frac{1+h_q^2}{h_p^2}
    \flow_{pp}
    - 2\frac{h_q}{h_p} \flow_{pq}  + \flow_{qq}  + 
    b_1 \flow_q + b_2
    \flow_p 
    = 0 \quad \textup{in } S,
  \end{align}
  together with the boundary conditions 
  \begin{align}
    \label{eqn:bc}
    \flow = 0 \quad\textup{on } p=0,
    \qquad 
    \flow = w^2 + 2(\FF-\FF_+) \quad \textup{on } p=1. 
  \end{align}
  In the irrotational case with no vorticity, $b_1,b_2 \equiv 0$ and \eqref{eqn:elliptic} is equivalent to Laplace's equation $(\partial_x^2 + \partial_y^2)\flow = 0$ in the original variables.
\end{proposition}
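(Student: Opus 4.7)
The plan is to verify the claimed PDE by direct computation. First I would derive closed-form expressions for the two first derivatives of $\flow$: $\flow_p$ is immediate from the definition \eqref{def:ffff}, giving $\flow_p = w_p^2/(h_p H_p^2) - w_q^2/h_p$, while $\flow_q$ requires more care. Differentiating under the integral and using the bulk PDE \eqref{eqn:height:bulk} together with the background relation $H_{pp}=\omega H_p^3$ to substitute for $h_{qq}$, I expect the resulting integrand to be exact with respect to $p$, with antiderivative
\[
K(q,p) := \frac{h_q}{H_p^2} - \frac{h_q(1+h_q^2)}{h_p^2}.
\]
Since $h(q,0)=0$ forces $h_q(q,0)=0$ and thus $K(q,0)=0$, this collapses the integral to $\flow_q = K$. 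The regularity $\flow \in C^{2,\gamma}(\bar S)$ and the two boundary conditions then follow immediately: $\flow(q,0)=0$ is the definition and $\flow(q,1) = w^2(q,1) + 2(\FF-\FF_+)$ is \eqref{eqn:rhsme}.

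Next I would compute the principal part $\mathcal{L}[\flow] := \frac{1+h_q^2}{h_p^2}\flow_{pp} - 2\frac{h_q}{h_p}\flow_{pq} + \flow_{qq}$ by direct differentiation of the explicit formulas, using \eqref{eqn:height:bulk} one more time to eliminate $h_{qq}$. The key hope is a delicate algebraic cancellation in which every term involving second derivatives of $h$ drops out, leaving the simple source
\[
\mathcal{L}[\flow] = \omega\,\flow_p + \frac{2\omega\bigl(w_p^2 + h_q^2 H_p^2\bigr)}{h_p^2 H_p}.
\]
A useful way to anticipate and check this identity is to pass to the original $(x,y)$-coordinates. The change-of-variables expression of the Laplacian in $(q,p)$-form gives $\mathcal{L}[\flow] = \Delta\flow + \omega\flow_p$, while the definition of $\flow$ yields $\flow_x = 2v(\tilde U - u)$ and $\flow_y = (\tilde U - u)^2 - v^2$ with $\tilde U := U(H(\psi))$ the background velocity along the streamline through $(x,y)$; differentiating these expressions and applying incompressibility $u_x+v_y=0$ together with $v_x - u_y = \omega(\psi)$ produces the equivalent identity
\[
\Delta\flow = \frac{2\omega(\psi)}{\tilde U - c}\bigl[(\tilde U - u)^2 + v^2\bigr].
\]

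The final step identifies this source as a bounded multiple of $|\nabla\flow|$. The algebraic identity $\flow_x^2 + \flow_y^2 = [(\tilde U - u)^2 + v^2]^2$, which is immediate from the expressions for $\flow_x$ and $\flow_y$, shows the source equals $\frac{2\omega(\psi)}{\tilde U - c}\sqrt{\flow_x^2+\flow_y^2}$. The prefactor is bounded since $\omega \in C^\gamma$ and the unidirectionality assumption forces $\tilde U - c \ge \inf(U-c) > 0$. Choosing $b_i' = -\frac{2\omega(\psi)}{\tilde U-c}\cdot \flow_i/\sqrt{\flow_x^2+\flow_y^2}$ pointwise (and zero where $\nabla\flow$ vanishes) yields $\Delta\flow + b_1'\flow_x + b_2'\flow_y = 0$ with $b_1', b_2' \in L^\infty$, and the change of variables back to $(q,p)$ delivers the required $b_1, b_2 \in L^\infty(S)$. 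In the irrotational case $\omega \equiv 0$ the source vanishes identically, so $b_1, b_2 \equiv 0$ and the equation reduces to Laplace's equation in $(x,y)$.

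The main obstacle is the algebraic cancellation in assembling $\mathcal{L}[\flow]$: there is no obvious conceptual reason why every second-derivative-of-$h$ term should cancel after precisely one use of the $h$-PDE, leaving exactly the simple source above. Verifying this is a lengthy but elementary calculation, and recognizing the remaining source as a bounded multiple of $|\nabla\flow|$ is what makes the existence of bounded $b_1, b_2$ possible.
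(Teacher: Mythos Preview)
Your argument is correct. The boundary conditions, the formula for $\flow_q$, and the principal-part identity all check out, and your expression $\Delta_{x,y}\flow = \tfrac{2\omega}{\tilde U - c}\,[(\tilde U-u)^2+v^2]$ together with the factorisation $\flow_x^2+\flow_y^2 = [(\tilde U-u)^2+v^2]^2$ gives exactly what is needed to absorb the source into bounded first-order coefficients.

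The paper takes the same overall route but packages the last step differently: it stays entirely in $(q,p)$ coordinates and, after arriving at the inhomogeneous equation with source $4H_{pp}w_p^2/(H_p^4 h_p^2)$, derives the two identities $w_p^2 = H_p^2 w_q^2 + H_p^2 h_p \flow_p$ and $w_p w_q = \tfrac12 H_p h_p(h_p\flow_q - h_q\flow_p)$, multiplies them to obtain a quadratic for $w_p^2$, and solves it explicitly as $w_p^2 = \tfrac12 H_p^2 h_p\bigl(\flow_p + \sqrt{\flow_p^2 + (h_q\flow_p - h_p\flow_q)^2}\bigr)$. Your physical-variable computation is the same algebra in disguise: since $h_p\flow_y=\flow_p$ and $h_p\flow_x = h_p\flow_q - h_q\flow_p$, the paper's square root is precisely $h_p\sqrt{\flow_x^2+\flow_y^2}$, and your identity $\sqrt{\flow_x^2+\flow_y^2}=(\tilde U-u)^2+v^2$ is what makes the quadratic formula close up. What you gain is a more transparent reason for the cancellation (the source is visibly a bounded multiple of $|\nabla_{x,y}\flow|$), whereas the paper's version has the virtue of never leaving the semi-hodograph variables in which the rest of the analysis is carried out.
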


Note that, when the asymptotic condition \eqref{eqn:wasym} holds, $\FF = \FF_+$ so that $\flow$ is positive in $S$ by the maximum principle applied to \eqref{eqn:elliptic}--\eqref{eqn:bc}.

\begin{proof} 
  The boundary condition at $p=0$ is immediate, while the condition at $p=1$ follows directly from \eqref{eqn:rhsme}, and so it remains to show that $\flow \in C^{2,\gamma}(\bar S)$ solves \eqref{eqn:elliptic}.
 
  First, let us compute $\flow_q$. Differentiating \eqref{def:ffff} with respect to $p$ and then with respect to $q$, we find
\[
  \begin{split}
    \flow_{pq} &= \frac{2w_p}{h_pH_p^2}w_{pq}-\frac{w_p^2}{h_p^2H_p^2}w_{pq}-2w_q \bigg(\frac{w_q}{h_p}\bigg)_q-\frac{w_q^2}{h_p^2} w_{pq} \\
    &= - \bigg(\frac{1+w_q^2}{h_p^2}-\frac{1}{H_p^2}\bigg)w_{qp}-w_q\bigg(\frac{1+w_q^2}{h_p^2}-\frac{1}{H_p^2}\bigg)_p \\
    & = -\bigg[ w_q \bigg(\frac{1+w_q^2}{h_p^2}-\frac{1}{H_p^2}\bigg) \bigg]_p.
  \end{split}
\]
Here we used the identity
\[
\bigg(\frac{1+w_q^2}{2h_p^2}-\frac{1}{2H_p^2}\bigg)_p =  \bigg(\frac{w_q}{h_p}\bigg)_q,
\]	
which is the difference of \eqref{eqn:height:bulk} and the first equation in \eqref{eqn:Hode}. Taking into account that $w_q$ is zero along the bottom, we conclude 
\begin{align}
    \label{eqn:flowq}
    \flow_q = -w_q \bigg(\frac{1+w_q^2}{h_p^2}-\frac 1{H_p^2}\bigg).
\end{align}
Now the definition of $\flow$ and \eqref{eqn:flowq} show $\flow \in C^{2,\gamma}(\bar S)$. 
 
A direct computation gives the following formulas for the second-order derivatives of $\flow$:
\begin{align*}
\flow_{qq} & = -\frac{\left(-2 H_p w_p+3 H_p^2 w_q^2-w_p^2\right)}{H_p^2 h_p^2}w_{qq} + \frac{2w_q(1+w_q^2)}{h_p^3} w_{qp}, \\
\flow_{qp} & = - \frac{2 w_q}{h_p} w_{qq} + \frac{2H_pw_p + w_p^2 + H_p^2w_q^2}{H_p^2 h_p^2}w_{qp}, \\
\flow_{pp} & = - \frac{2 w_q}{h_p} w_{qp} + \frac{2H_pw_p + w_p^2 + H_p^2w_q^2}{H_p^2 h_p^2}w_{pp} + \frac{H_{pp}(-3H_pw_p^2-2w_p^3+H_p^3w_q^2)}{H_p^3 h_p^2}.
\end{align*}
We use these formulas to compute
\begin{align}
  \label{eqn:tocompute}
  \begin{aligned}
    \frac{1+h_q^2}{h_p^2}
    \flow_{pp}- 2\frac{h_q}{h_p} \flow_{pq} + \flow_{qq} & = \frac{2H_pw_p + w_p^2 + H_p^2w_q^2}{H_p^2 h_p^2} \left(\frac{1+h_q^2}{h_p^2}
    w_{pp}- 2\frac{h_q}{h_p} w_{pq} + w_{qq}\right) \\
    & \qquad + \frac{1+w_q^2}{h_p^2} \, \frac{H_{pp}(-3H_pw_p^2-2w_p^3+H_p^3w_q^2)}{H_p^3 h_p^2}.
  \end{aligned}
\end{align}
Since $w$ solves 
\[
\frac{1 + h_q^2}{h_p^2} w_{pp} - 2 \frac{h_q}{h_p} w_{qp} + w_{qq}  = \frac{H_{pp} h_p}{H_p^3} \left(1 - \frac{H_p^3(1+w_q^2)}{h_p^3}\right),
\]
we can rewrite \eqref{eqn:tocompute} as
\begin{align}
    \label{eqn:inhom}
    \frac{1+h_q^2}{h_p^2}
          \flow_{pp}
          - 2\frac{h_q}{h_p} \flow_{pq} + \flow_{qq}  - 
    \frac{H_{pp}(w_p-H_p)}{H_p^3 h_p} \flow_p
    = \frac{4H_{pp}w_p^2}{H_p^4 h_p^2}.
\end{align}
  When the flow is irrotational so that $H_{pp} = 0$, both the source term on the right hand side  and the coefficient of $\flow_p$ vanish. Indeed, in this very special case \eqref{eqn:inhom} is precisely the statement that $\flow$ is a harmonic function of the physical variables $x$ and $y$. In general, however, the source term does not have a definite sign, and so \eqref{eqn:inhom} is insufficient for our purposes.
  
  To circumvent this deficiency, we use \eqref{def:ffff} and \eqref{eqn:flowq} to ``eliminate'' the factor of $w_p^2$ on the right hand side of \eqref{eqn:inhom} in favor of $\flow_p$ and $\flow_q$. Linear combinations of our formulas for $\flow_p$ and $\flow_q$ give the two identities
  \begin{align}
    \label{eqn:killwp}
    w_p^2 &= H_p^2 w_q^2 + H_p^2 h_p \flow_p,\\
    \label{eqn:wpwq}
    w_p w_q &= \frac{H_p h_p}{2} (h_p \flow_q - h_q \flow_p).
  \end{align}
  Combining \eqref{eqn:killwp} and \eqref{eqn:wpwq} yields a quadratic equation for $w_p^2$:
  \begin{equation*}
    w_p^4 = \frac{H_p^4 h_p^2}{4} (h_q \flow_p  - h_p \flow_q)^2
    + H_p^2 h_p w_p^2 \flow_p.
  \end{equation*}
  Since $w_p^2 \ge 0$, this has the unique solution
  \begin{equation}
    \begin{split}
      \label{eqn:wp2}
      w_p^2 &= \frac{H_p^2 h_p}{2} 
      \Big(\flow_p
      + \sqrt{\flow_p^2 + (\flow_p h_q - \flow_q h_p)^2}\Big)\\
      &= \frac{H_p^2 h_p}{2} (B_1 \flow_q + B_2 \flow_p),
    \end{split}
  \end{equation}
  where the coefficients $B_1,B_2$ are well-defined $L^\infty$ functions on $S$:
  \begin{equation*}
    \begin{aligned}
      B_1 &= 
      \frac{\sign(\flow_p h_q - \flow_q h_p)\sqrt{\flow_p^2 + (\flow_p h_q - \flow_q h_p)^2}}{\abs{\flow_p} + \abs{\flow_p h_q - \flow_q h_p}},\\
      B_2 &= 1 + 
      \frac{\sign(\flow_p)\sqrt{\flow_p^2 + (\flow_p h_q - \flow_q h_p)^2}}{\abs{\flow_p} + \abs{\flow_p h_q - \flow_q h_p}}.
    \end{aligned}
  \end{equation*}
  Substituting \eqref{eqn:wp2} into \eqref{eqn:inhom} yields the \emph{homogeneous} linear equation
  \begin{equation*}
    \frac{1+h_q^2}{h_p^2}
    \flow_{pp}
    - 2\frac{h_q}{h_p} \flow_{pq}+ \flow_{qq}- 
    \bigg(\frac{H_{pp}(w_p-H_p)}{H_p^3 h_p} + 2\frac{H_{pp}}{H_p^2 h_p}B_2\bigg) \flow_p - 2\frac{H_{pp}}{H_p^2 h_p} B_1
    \flow_q 
    = 0,
  \end{equation*}
  which is of the desired form \eqref{eqn:elliptic}.
\end{proof}

We note that, for irrotational waves, the recent paper \cite{Basu2019} considers a ``flow force function'' obtained by replacing the upper limit of integration in \eqref{eqn:FFh} by $p$. 

\begin{proposition}\label{p:exact}
  Any solution $w \in C^{2,\gamma}(\bar S)$ to \eqref{eqn:lin}--\eqref{eqn:wasym} which is not identically zero satisfies
  \begin{equation}\label{eqn:thmasymp}
          w(q,p) = a \varphi(p)e^{-\tau q} + e^{-\tau'q}z(q,p),
  \end{equation}
  where $a \neq 0$, $z \in C^{2,\gamma}(\bar{S})$, $\tau'>\tau>0$, and $\lambda=\tau^2$ is a positive eigenvalue of the Sturm-Liouville problem \eqref{eqn:SL} with eigenfunction $\varphi$.
\end{proposition}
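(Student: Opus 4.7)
The plan is to bracket the decay rate of $w$ from above and below by exponential rates and then expand in the Sturm--Liouville eigenbasis to extract a single leading exponential. The upper bound will come from the normal hyperbolicity of the center manifold of Mielke~\cite{Mielke88,GrovesWahlen08}, while the lower bound will come from applying the Harnack inequality to the nonnegative function $\flow$ furnished by Proposition~\ref{prop:ffff}. Once both are in hand, the linear problem \eqref{eqn:SL} dictates the admissible exponential rates, and the nonlinear analysis proceeds as in Hur's treatment of the supercritical case~\cite{Hur2008}.

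First I would invoke the center-manifold reduction for the steady water wave problem viewed as a spatial dynamical system in $q$. For subcritical $F < 1$ the linearization has a single pair of purely imaginary eigenvalues $\pm i\tau_0$ (from $\lambda_0 = -\tau_0^2$) and infinitely many real eigenvalues $\pm \tau_j$ bounded away from zero by $\tau_1 > 0$. Choosing $q_0$ large enough that $w(q,\cdot)$ is small in a suitable norm for $q\ge q_0$, the center-manifold theorem decomposes $w(q,\cdot)$ into a two-dimensional center piece (whose bounded trajectories are periodic and therefore cannot decay) and a remainder in the stable complement. The asymptotic condition \eqref{eqn:wasym} forces the center-manifold component to vanish, and normal hyperbolicity yields $|w(q,p)|\le Ce^{-\alpha q}$ for some $\alpha \ge \tau_1$, together with analogous decay of first and second derivatives by Schauder estimates applied to \eqref{eqn:lin}.

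Next I would use Proposition~\ref{prop:ffff} to rule out super-exponential decay. Since \eqref{eqn:wasym} forces $\FF = \FF_+$, the function $\flow$ is $C^{2,\gamma}$ on $\bar S$, vanishes on $p=0$, equals $w^2\ge 0$ on $p=1$, and solves the homogeneous linear elliptic equation \eqref{eqn:elliptic}; the maximum principle then gives $\flow\ge 0$, and the strong maximum principle $\flow > 0$ in the interior as long as $w\not\equiv 0$. Applying the Harnack inequality on unit boxes sliding out to $q=+\infty$ produces a finite $\beta$ with $\flow(q,p)\ge c e^{-\beta q}$. On the other hand, the explicit formula \eqref{def:ffff} together with the Schauder decay estimates from the previous paragraph bounds $\flow$ pointwise by a constant times the squared decay rate of $w$, so any super-exponential decay of $w$ would contradict Harnack. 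This pins $w$ to a genuine exponential decay rate.

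Finally I would extract the asymptotic by expanding $w$ in the orthonormal basis $\{\varphi_j\}$ of $L^2(0,1;H_p^{-1})$, setting $c_j(q)=\int_0^1 w(q,p)\varphi_j(p) H_p^{-1}(p)\,dp$. Testing \eqref{eqn:lin} against $\varphi_j$ yields ODEs $c_j''-\lambda_j c_j = R_j(q)$, where the source $R_j$ is quadratic in $w$ and its derivatives and hence decays at twice the rate of $w$. A variation-of-parameters argument as in~\cite{Hur2008} then gives $c_j(q)=a_j e^{-\tau_j q}+O(e^{-\tau' q})$ for each $j\ge 1$, while the oscillatory branch $j=0$ is incompatible with \eqref{eqn:wasym} and forces $c_0\equiv 0$; choosing $\tau$ to be the smallest $\tau_j$ with $a_j \ne 0$ produces \eqref{eqn:thmasymp}. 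The main obstacle is the Harnack/bootstrap step: one must convert the qualitative positivity of $\flow$ into a quantitative lower bound on $w$ that precisely matches an eigenmode rate, and it is here that the fact $\flow$ satisfies an \emph{elliptic} equation (as opposed to merely being a flux-type integral) is essential.
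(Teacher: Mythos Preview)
Your proposal is correct and follows essentially the same strategy as the paper: exponential decay from Mielke's normal hyperbolicity together with the Groves--Wahl\'en center manifold, exclusion of super-exponential decay via the Harnack inequality applied to the strictly positive $\flow$ from Proposition~\ref{prop:ffff}, and then the standard exact asymptotics as in~\cite{Hur2008,Kozlov1999}. One small correction: the coefficient $c_0$ need not vanish identically---its homogeneous part is oscillatory and hence ruled out by \eqref{eqn:wasym}, while the particular solution driven by the quadratic source decays at twice the leading rate and is simply absorbed into the remainder $e^{-\tau' q}z$.
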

We emphasise that $\lambda = \tau^2$ is a positive eigenvalue of \eqref{eqn:SL} and not the unique negative eigenvalue $\lambda = -\tau_0^2$. Thus the corresponding eigenfunction $\varphi(p)$ in \eqref{eqn:thmasymp} must change sign.
 
\begin{proof} 
  With only the weak assumption \eqref{eqn:wasym} about the decay of $w$, exact asymptotics such as \eqref{eqn:thmasymp} can be difficult to obtain. Thankfully, in our case a result of Mielke~\cite{Mielke1990} implies that $w$ decays exponentially. Indeed, applying theorem~3.1b in \cite{Mielke1990} one finds that $w$ converges at some exponential rate to a small-amplitude solution $v$ of \eqref{eqn:lin}. By the center manifold construction of Groves and Wahl\'en~\cite{GrovesWahlen08}, this small solution $v$ must be periodic, at which point the asymptotic condition \eqref{eqn:wasym} for $w$ forces $v\equiv 0$. Thus $w$ decays to zero at an exponential rate.
 
  Having established exponential decay for $w$, the exact asymptotics \eqref{eqn:thmasymp} become standard and well known (see \cite{Hur2008} or part III in \cite{Kozlov1999}). One only has to check that $\tau$ can be chosen so that the coefficient $a$ in \eqref{eqn:thmasymp} is nonzero, i.e.~check that $w$ does not decay at a hyper-exponential rate. But this is not possible in view of Proposition~\ref{prop:ffff}. Indeed, the asymptotic condition \eqref{eqn:wasym} implies that $\FF = \FF_+$ and so $\flow = w^2 \ge 0$ on $p=1$, while $\flow = 0$ for $p=0$. Proposition~\ref{prop:ffff} and the maximum principle for unbounded domains \cite{Vitolo2007} therefore imply that $\flow > 0$ everywhere in $S$. Hyper-exponential decay of $\flow$ is thus forbidden by the Harnack inequality. Since hyper-exponential decay for $w$ would imply hyper-exponential decay for $\flow$ by its definition in \eqref{def:ffff}, the proof is complete.
\end{proof}

Combining Propositions~\ref{prop:ffff} and \ref{p:exact}, it is easy to finalize the proof of Theorem \ref{t:main}. 
\begin{proof}[Proof of Theorem~\ref{t:main}]
  Substituting the asymptotics \eqref{eqn:thmasymp} into \eqref{def:ffff}, we find that
  \begin{align*}
    \flow(q,p) 
    &= \int_0^p 
    \bigg[ 
    \frac{a^2}2\bigg(\frac{(e^{-\tau q} \varphi)_p^2}{H_p^3} - \frac{(e^{-\tau q}
        \varphi)_q^2}{H_p^2}\bigg) + O(e^{-(\tau+\tau') q})
    \bigg] \, dp'\\
    &= \frac{a^2 e^{-2\tau q}}2 \int_0^p 
    \bigg(\frac{\varphi_p^2}{H_p^3} - \tau^2 \frac{\varphi^2}{H_p}\bigg) 
    \, dp'
    + O(e^{-(\tau+\tau') q}).
  \end{align*}
  Since $\varphi$ solves \eqref{eqn:SL} with $\lambda=\tau^2$, we can calculate
  \begin{align*}
    \int_0^p 
    \bigg(\frac{\varphi_p^2}{H_p^3} - \tau^2 \frac{\varphi^2}{H_p}\bigg) 
    \, dp'
    = \frac{\varphi(p)\varphi_p(p)}{H_p^3(p)} ,
  \end{align*}
  and so the above expansion simplifies to
  \begin{align}
    \label{eqn:contradiction}
    \flow(q,p)
    &= \frac{a^2 \varphi(p) \varphi_p(p)}{2 H_p^3(p)}
    e^{-2\tau q}
    + O(e^{-(\tau+\tau') q}).
  \end{align}
  We know that $\varphi^2$ has a root in $(0,1)$ and also that $\varphi^2(0)=0$. Thus, its derivative $2\varphi\varphi_p$ changes sign on $(0,1)$. By \eqref{eqn:contradiction}, the same must be true for $\flow$ when $q$ is sufficiently large. On the other hand $\flow > 0$ everywhere in $S$ by the maximum principle, which leads to a contradiction.
\end{proof}

\subsection*{Acknowledgments} 
Large parts of this research were carried out while E.L. and M.H.W. were at Mathematisches Forschungsinstitut Oberwolfach for a Research in Pairs program. V.K. was supported by the Swedish Research
Council (VR), 2017-03837.
\bibliographystyle{alpha}
\bibliography{arxiv}

\begin{thebibliography}{CMYK01}

\bibitem[AFT82]{AmickFraenkelToland82}
C.~J. Amick, L.~E. Fraenkel, and J.~F. Toland.
\newblock On the {S}tokes conjecture for the wave of extreme form.
\newblock {\em Acta Math.}, 148:193--214, 1982.

\bibitem[AT81a]{AmickToland81a}
C.~J. Amick and J.~F. Toland.
\newblock On periodic water-waves and their convergence to solitary waves in
  the long-wave limit.
\newblock {\em Philos. Trans. Roy. Soc. London Ser. A}, 303(1481):633--669,
  1981.

\bibitem[AT81b]{AmickToland81b}
C.~J. Amick and J.~F. Toland.
\newblock On solitary water-waves of finite amplitude.
\newblock {\em Arch. Rational Mech. Anal.}, 76(1):9--95, 1981.

\bibitem[Bab87]{Babenko87b}
K.~I. Babenko.
\newblock Some remarks on the theory of surface waves of finite amplitude.
\newblock {\em Dokl. Akad. Nauk SSSR}, 294(5):1033--1037, 1987.

\bibitem[Bas19]{Basu2019}
Biswajit Basu.
\newblock A flow force reformulation for steady irrotational water waves.
\newblock {\em J. Differential Equations}, 2019.

\bibitem[BBB90]{BenjaminBonaBose90}
T.~B. Benjamin, J.~L. Bona, and D.~K. Bose.
\newblock Solitary-wave solutions of nonlinear problems.
\newblock {\em Philos. Trans. Roy. Soc. London Ser. A}, 331(1617):195--244,
  1990.

\bibitem[Bea77]{Beale77}
J.~Thomas Beale.
\newblock The existence of solitary water waves.
\newblock {\em Comm. Pure Appl. Math.}, 30(4):373--389, 1977.

\bibitem[Ben84]{Benjamin84}
T.~B. Benjamin.
\newblock Impulse, flow force and variational principles.
\newblock {\em IMA J. Appl. Math.}, 32(1-3):3--68, 1984.

\bibitem[Boy98]{Boyd98}
John~P. Boyd.
\newblock {\em Weakly nonlocal solitary waves and beyond-all-orders
  asymptotics}, volume 442 of {\em Mathematics and its Applications}.
\newblock Kluwer Academic Publishers, Dordrecht, 1998.

\bibitem[CMYK01]{Champneys01}
A.~R. Champneys, B.~A. Malomed, J.~Yang, and D.~J. Kaup.
\newblock Embedded solitons: solitary waves in resonance with the linear
  spectrum.
\newblock {\em Phys. D}, 152/153:340--354, 2001.

\bibitem[Cra02]{craig:nonexistence}
Walter Craig.
\newblock Non-existence of solitary water waves in three dimensions.
\newblock {\em R. Soc. Lond. Philos. Trans. Ser. A Math. Phys. Eng. Sci.},
  360(1799):2127--2135, 2002.

\bibitem[CS88]{CraigSternberg88}
Walter Craig and Peter Sternberg.
\newblock Symmetry of solitary waves.
\newblock {\em Comm. Partial Differential Equations}, 13(5):603--633, 1988.

\bibitem[CS92]{CraigSternberg92}
Walter Craig and Peter Sternberg.
\newblock Symmetry of free-surface flows.
\newblock {\em Arch. Rational Mech. Anal.}, 118(1):1--36, 1992.

\bibitem[CS11]{cs:discont}
Adrian Constantin and Walter~A. Strauss.
\newblock Periodic traveling gravity water waves with discontinuous vorticity.
\newblock {\em Arch. Ration. Mech. Anal.}, 202(1):133--175, 2011.

\bibitem[CWW18]{ChenWalshWheeler18}
Robin~Ming Chen, Samuel Walsh, and Miles~H. Wheeler.
\newblock Existence and qualitative theory for stratified solitary water waves.
\newblock {\em Ann. Inst. H. Poincar\'e Anal. Non Lin\'eaire}, 35(2):517--576,
  2018.

\bibitem[DJ34]{DubreilJacotin34}
M.~L. Dubreil-Jacotin.
\newblock Sur la d{\'e}termination rigoureuse des ondes permanentes
  p{\'e}riodiques d'ampleur finite.
\newblock {\em J. Math.\ Pures Appl.\/}, 13:217--291., 1934.

\bibitem[FH54]{FriedrichsHyers54}
K.~O. Friedrichs and D.~H. Hyers.
\newblock The existence of solitary waves.
\newblock {\em Comm. Pure Appl. Math.}, 7:517--550, 1954.

\bibitem[Gro04]{groves:survey}
M.~D. Groves.
\newblock Steady water waves.
\newblock {\em J. Nonlinear Math. Phys.}, 11(4):435--460, 2004.

\bibitem[GW08]{GrovesWahlen08}
M.~D. Groves and E.~Wahl{\'e}n.
\newblock Small-amplitude {S}tokes and solitary gravity water waves with an
  arbitrary distribution of vorticity.
\newblock {\em Phys. D.}, 237(10--12):1530--1538, 2008.

\bibitem[HM06]{Helfrich2006}
Karl~R. Helfrich and W.~Kendall Melville.
\newblock Long nonlinear internal waves.
\newblock {\em Annu. Rev. Fluid Mech.}, 38:395--425, 2006.

\bibitem[Hur08a]{hur:exact}
Vera~Mikyoung Hur.
\newblock Exact solitary water waves with vorticity.
\newblock {\em Arch. Ration. Mech. Anal.}, 188(2):213--244, 2008.

\bibitem[Hur08b]{Hur2008}
Vera~Mikyoung Hur.
\newblock Symmetry of solitary water waves with vorticity.
\newblock {\em Math. Res. Lett.}, 15(3):491--509, 2008.

\bibitem[Hur12]{hur:nosolitary}
Vera~Mikyoung Hur.
\newblock No solitary waves exist on 2{D} deep water.
\newblock {\em Nonlinearity}, 25(12):3301--3312, 2012.

\bibitem[IT18]{it:nosolitary}
Mihaela Ifrim and Daniel Tataru.
\newblock No solitary waves in 2-d gravity and capillary waves in deep water.
\newblock 2018.
\newblock arXiv:1808.07916 [math.AP].

\bibitem[KK10]{kk:nearcrit}
Vladimir Kozlov and Nikolay Kuznetsov.
\newblock The {B}enjamin-{L}ighthill conjecture for near-critical values of
  {B}ernoulli's constant.
\newblock {\em Arch. Ration. Mech. Anal.}, 197(2):433--488, 2010.

\bibitem[KKL15]{Kozlov2015}
V.~Kozlov, N.~Kuznetsov, and E.~Lokharu.
\newblock On bounds and non-existence in the problem of steady waves with
  vorticity.
\newblock {\em J. Fluid Mech.}, 765:R1 (13 pages), 2015.

\bibitem[KKL17]{Kozlov2017a}
Vladimir Kozlov, Nikolay Kuznetsov, and Evgeniy Lokharu.
\newblock On the {B}enjamin-{L}ighthill conjecture for water waves with
  vorticity.
\newblock {\em J. Fluid Mech.}, 825:961--1001, 2017.

\bibitem[KM99]{Kozlov1999}
Vladimir Kozlov and Vladimir Maz'ya.
\newblock {\em Differential Equations with Operator Coefficients}.
\newblock Springer Berlin Heidelberg, 1999.

\bibitem[KN78]{KeadyNorbury78}
G.~Keady and J.~Norbury.
\newblock On the existence theory for irrotational water waves.
\newblock {\em Math. Proc. Cambridge Philos. Soc.}, 83(1):137--157, 1978.

\bibitem[KP74]{kp}
G.~Keady and W.~G. Pritchard.
\newblock Bounds for surface solitary waves.
\newblock {\em Proc. Cambridge Philos. Soc.}, 76:345--358, 1974.

\bibitem[Lav54]{Lavrentiev43}
M.~A. Lavrentiev.
\newblock On the theory of long waves (1943); {A} contribution to the theory of
  long waves (1947).
\newblock {\em Amer. Math. Soc. Transl.}, 102:3--50, 1954.

\bibitem[Lom00]{Lombardi00}
Eric Lombardi.
\newblock {\em Oscillatory integrals and phenomena beyond all algebraic
  orders}, volume 1741 of {\em Lecture Notes in Mathematics}.
\newblock Springer-Verlag, Berlin, 2000.

\bibitem[McL84]{mcleod}
J.~B. McLeod.
\newblock The {F}roude number for solitary waves.
\newblock {\em Proc. Roy. Soc. Edinburgh Sect. A}, 97:193--197, 1984.

\bibitem[Mie88]{Mielke88}
Alexander Mielke.
\newblock Reduction of quasilinear elliptic equations in cylindrical domains
  with applications.
\newblock {\em Math. Methods Appl. Sci.}, 10(1):51--66, 1988.

\bibitem[Mie90]{Mielke1990}
Alexander Mielke.
\newblock Normal hyperbolicity of center manifolds and {S}aint-{V}enant's
  principle.
\newblock {\em Arch. Rational Mech. Anal.}, 110(4):353--372, 1990.

\bibitem[Mil80]{miles:survey}
John~W. Miles.
\newblock Solitary waves.
\newblock {\em Annu. Rev. Fluid Mech.}, 12:11--43, 1980.

\bibitem[Rus44]{russell}
J.~Scott Russell.
\newblock Report on waves.
\newblock In {\em 14th meeting of the British Association for the Advancement
  of Science}, volume 311--390, 1844.

\bibitem[Sta47]{starr}
Victor~P. Starr.
\newblock Momentum and energy integrals for gravity waves of finite height.
\newblock {\em J. Mar. Res.}, 6:175--193, 1947.

\bibitem[Str10]{strauss:survey}
Walter~A. Strauss.
\newblock Steady water waves.
\newblock {\em Bull. Amer. Math. Soc. (N.S.)}, 47(4):671--694, 2010.

\bibitem[Sun99]{sun:nonexistence}
S.~M. Sun.
\newblock Non-existence of truly solitary waves in water with small surface
  tension.
\newblock {\em R. Soc. Lond. Proc. Ser. A Math. Phys. Eng. Sci.},
  455(1986):2191--2228, 1999.

\bibitem[Tes12]{Teschl12}
Gerald Teschl.
\newblock {\em Ordinary differential equations and dynamical systems}, volume
  140 of {\em Graduate Studies in Mathematics}.
\newblock American Mathematical Society, Providence, RI, 2012.

\bibitem[TK61]{ter:rot}
A.~M. Ter-Krikorov.
\newblock The solitary wave on the surface of a turbulent liquid.
\newblock {\em \v Z. Vy\v cisl. Mat. i Mat. Fiz.}, 1:1077--1088, 1961.

\bibitem[Tol97]{Toland96}
J.~F. Toland.
\newblock Stokes waves.
\newblock {\em Topol. Methods Nonlinear Anal.}, 7 [8](1):1--48 [412--414], 1996
  [1997].

\bibitem[Vit07]{Vitolo2007}
Antonio Vitolo.
\newblock A note on the maximum principle for second-order elliptic equations
  in general domains.
\newblock {\em Acta Mathematica Sinica, English Series}, 23(11):1955--1966,
  2007.

\bibitem[Whe13]{Wheeler13}
Miles~H. Wheeler.
\newblock Large-amplitude solitary water waves with vorticity.
\newblock {\em SIAM J. Math. Anal.}, 45(5):2937--2994, 2013.

\bibitem[Whe15]{Wheeler15b}
Miles~H. Wheeler.
\newblock The {F}roude number for solitary water waves with vorticity.
\newblock {\em J. Fluid Mech.}, 768:91--112, 2015.

\end{thebibliography}
\end{document}